\renewcommand{\div}{\operatorname{div}}
\newcommand{\Id}{\operatorname{Id}}
\newcommand{\rank}{\operatorname{rank}}
\renewcommand{\leq}{\leqslant}
\renewcommand{\geq}{\geqslant}
\renewcommand{\Re}{\operatorname{Re}}
\numberwithin{equation}{section}
\newtheorem{Theorem}{Theorem}[section]
\newtheorem{Lemma}[Theorem]{Lemma}
\title{Feedback stabilization of parabolic systems with input delay}
\author[1]{Imene Aicha Djebour}
\author[1]{Tak\'eo Takahashi}
\author[1]{Julie Valein}
\affil[1]{Universit\'e de Lorraine, CNRS, Inria, IECL, F-54000 Nancy}
\date{\today}
\begin{document}

\maketitle

\begin{abstract}
This work is devoted to the stabilization of parabolic systems with a finite-dimensional control subjected to a constant delay. 
Our main result shows that the Fattorini-Hautus criterion yields the existence of such a feedback control, as in the case of stabilization without delay. 
The proof 
consists in splitting the system into a finite dimensional unstable part and a stable infinite-dimensional part and to apply the Artstein transformation on the finite-dimensional system to remove the delay in the control.
Using our abstract result, we can prove new results for the stabilization of parabolic systems with constant delay: 
the $N$-dimensional linear convection-diffusion equation with $N\geq 1$ and the Oseen system. We end the article by showing that this theory 
can be used to stabilize nonlinear parabolic systems with input delay by proving the 
local feedback distributed stabilization of the Navier-Stokes system around a stationary state.
\end{abstract} 
	
\vspace{1cm}
	
\noindent {\bf Keywords:} stabilizability, delay control, parabolic systems, finite-dimensional control
	
\noindent {\bf 2010 Mathematics Subject Classification}  93B52, 93D15, 35Q30, 76D05, 93C20.

\tableofcontents
	
\section{Introduction}
Time delay phenomena appear in many applications, for instance in biology, mechanics, automatic control or engineering 
and are inevitable due to the time-lag between the measurements and their exploitation. For instance in control problems, one need to take into account
the analysis time or the computation time. 
In the context of stability problems for partial differential equations with
delay, it is classical that a small delay in the feedback mechanism can destabilize a system
(see for instance \cite{MR818942, MR937679}). On the other hand, a delay term can also improve the performance of a
system (see for instance \cite{abdallah1993delayed}). These features appear for hyperbolic systems and here our aim is to consider the 
stabilization problems for a large class of parabolic systems with a particular delay input.

More precisely this article is devoted to the feedback stabilization of the system
\begin{equation}\label{ijt000}
z'=Az+Bv+f, \quad z(0)=z^0,
\end{equation}
where $A$ is the generator of an analytic semigroup $(e^{tA})_{t\geq 0}$ on a Hilbert space $\mathbb{H}$, 
where $B : \mathbb{U} \to \mathcal{D}(A^*)'$ is a linear operator on a Hilbert space $\mathbb{U}$
and where $f$ is a given source satisfying an exponential decay at infinity. 
Our aim is to obtain a feedback control $v(t)$ that depends on the values of $z(s)$ for $s\leq t-\tau$, where $\tau>0$ is a positive constant 
corresponding to a delay. With such a feedback control, our aim is to obtain exponential stabilization of \eqref{ijt000} if we assume that it is the case without delay.
A characterization of the exponential stabilization of \eqref{ijt000} in the case without delay is the well-known Fattorini-Hautus criterion
(see \cite{Fattorini1966}, \cite{Hautus} and \cite{BT2014}):
\begin{equation}
\forall\varepsilon\in \mathcal{D}(A^*), \ 
\forall \lambda\in\mathbb{C},\ 
\Re\lambda\geq -\sigma\quad
A^*\varepsilon= \lambda \varepsilon \quad \text{and} \quad B^*\varepsilon=0
\quad\Longrightarrow\quad 
\varepsilon = 0.  \tag{$\text{UC}_{\sigma}$}\label{UCstab}
\end{equation}
Here and in what follows, we denote by $A^* : \mathcal{D}(A^*) \to \mathbb{H}$ and by $B^* : \mathcal{D}(A^*) \to \mathbb{U}$ the adjoint operators of $A$ and $B$.
This criterion is equivalent to the exponential stabilization of \eqref{ijt000} with a rate larger than $\sigma$ provided we assume the following hypotheses:
\begin{equation}\label{H1}\tag{Hyp1}
\begin{array}{c}
\text{The spectrum of $A$ consists of isolated eigenvalues $(\lambda_j)$ with 
finite algebraic multiplicity $N_j$}\\
\text{and there is no finite cluster point in } \{\lambda \in \mathbb{C} \ ; \ \Re \lambda \geq -\sigma\},
\end{array}
\end{equation}
\begin{equation}\label{H2}\tag{Hyp2}
B\in \mathcal{L}(\mathbb{U},\mathbb{H}_{-\gamma}) \quad \text{for some } \gamma\in [0,1).
\end{equation}
The spaces $\mathbb{H}_\alpha$ are defined as follows: we fix $\mu_0\in \rho(A)$, then
\begin{equation} \label{Halpha}
\mathbb{H}_\alpha:= \left\{
\begin{array}{ll}
\mathcal{D}((\mu_0-A)^{\alpha}) &\mbox{ if }\alpha\geq 0\\
\mathcal{D}((\mu_0-A^*)^{-\alpha})' &\mbox{ if }\alpha< 0
\end{array}
\right.\quad \mbox{ and }\quad 
\mathbb{H}_{\alpha}^*:= \left\{
\begin{array}{ll}
\mathcal{D}((\mu_0-A^*)^{\alpha}) &\mbox{ if }\alpha\geq 0\\
\mathcal{D}((\mu_0-A)^{-\alpha})' &\mbox{ if }\alpha< 0.
\end{array}
\right.
\end{equation}
To deal with the source $f$, we also assume the following hypothesis
\begin{equation}\label{H3}\tag{Hyp3}
\mathbb{H}_\alpha=[\mathbb{H},\mathcal{D}(A)]_{\alpha} \quad (\alpha \in [0,1]),
\end{equation}
where $[\cdot,\cdot]_{\alpha}$ denotes the complex interpolation method.
We assume that
\begin{equation}\label{ijt004}
f_\sigma:t\mapsto e^{\sigma t} f(t)\in L^2(0,\infty;\mathbb{H}_{-\gamma'}) \quad \gamma'<1/2.
\end{equation}
We say that $f\in L_\sigma^2(0,\infty;\mathbb{H}_{-\gamma'})$ if  $f_\sigma\in L^2(0,\infty;\mathbb{H}_{-\gamma'})$ and we write
$$
\left\|f\right\|_{L_\sigma^2(0,\infty;\mathbb{H}_{-\gamma'})}= \left\|f_\sigma\right\|_{L^2(0,\infty;\mathbb{H}_{-\gamma'})}.
$$
The same definition can be extended to spaces of the kind $L^p_\sigma(0,\infty;\mathbb{X})$, $C_\sigma^0([0,\infty);\mathbb{X})$, $H^m_\sigma(0,\infty;\mathbb{X})$, with $\mathbb{X}$ a Banach space.

Note that a sufficient condition for \eqref{H1} is that $A$ has compact resolvent. 
For all $\lambda_j$ eigenvalue of $A$, we define its geometric multiplicity
$$
\ell_j:=\dim \ker (A-\lambda_j\Id)\in \mathbb{N}^*.
$$
We also define the maximum of the geometric multiplicities of the unstable modes:
\begin{equation}\label{Nc}
N_+:=\max \{\ell_i, \ \Re \lambda_i\geq -\sigma\}.
\end{equation}

Our main result is the following theorem:
\begin{Theorem}\label{thmain}
Let us consider $\sigma>0$ and let us 
assume \eqref{H1}, \eqref{H2}, \eqref{H3} and \eqref{UCstab}. 
Then there exist $K\in L^\infty_{\rm loc}(\mathbb{R}^2;\mathcal{L}(\mathbb{H}))$, $\zeta_k\in \mathcal{D}(A^*)$, $v_k\in B^*\left(\mathcal{D}(A^*)\right)$, $k=1,\ldots,N_+$, 
such that
if
\begin{equation}
\label{fb1}
v(t)=\mathds{1}_{[\tau,+\infty)}(t) \sum_{k=1}^{N_+} \left(z(t-\tau)+\int_0^{t-\tau} K(t-\tau,s)z(s) \ ds,\zeta_k\right)_{\mathbb{H}} v_k,
\end{equation}
then for any  $z^0 \in \mathbb{H}$, $f$ satisfying \eqref{ijt004}, the solution $z$ of \eqref{ijt000} satisfies 
\begin{equation}\label{ijt008}
\|z(t)\|_{\mathbb{H}} \leq C e^{-\sigma t} \left(\|z^0\|_{\mathbb{H}} + \|f\|_{L^2_{\sigma}(0,\infty;\mathbb{H}_{-\gamma'})}\right)
\quad (t>0).
\end{equation}
Assume moreover that $\gamma=0$, $\gamma'=0$ and that $z^0\in \mathbb{H}_{1/2}$. Then,
$$
z\in L^2_\sigma (0,\infty;\mathbb{H}_1)\cap C^0_\sigma ([0,\infty);\mathbb{H}_{1/2})\cap H^1_\sigma (0,\infty;\mathbb{H}),
$$
and 
\begin{equation}\label{ijt200}
\|z\|_{L^2_\sigma (0,\infty;\mathbb{H}_1)\cap C^0_\sigma ([0,\infty);\mathbb{H}_{1/2})\cap H^1_\sigma (0,\infty;\mathbb{H})}
\leq C\left(\|z^0\|_{\mathbb{H}_{1/2}} + \|f\|_{L^2_{\sigma}(0,\infty;\mathbb{H})}\right).
\end{equation}
\end{Theorem}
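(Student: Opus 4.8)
The plan is to split \eqref{ijt000}, by a spectral projection, into a finite-dimensional unstable part on which an Artstein transformation removes the delay, and an infinite-dimensional part handled by the analyticity of $(e^{tA})_{t\geq0}$.

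First I would perform the spectral decomposition. By \eqref{H1} only finitely many eigenvalues $\lambda_1,\dots,\lambda_p$ of $A$ satisfy $\Re\lambda_j\geq-\sigma$; let $P_+$ be the corresponding spectral projection, $\mathbb{H}_\pm$ the (finite-, resp.\ infinite-dimensional) invariant subspaces, and $A_\pm$ the parts of $A$ in $\mathbb{H}_\pm$. By \eqref{H2} and $\gamma<1$ the projection $P_+$ extends boundedly to the scales $\mathbb{H}_\alpha$, so one may set $f_\pm=P_\pm f$ and $B_\pm=P_\pm B$, with $B_+\in\mathcal{L}(\mathbb{U},\mathbb{H}_+)$ and $B_-\in\mathcal{L}(\mathbb{U},\mathbb{H}_{-\gamma})$. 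Since $A$ is sectorial and \eqref{H1} holds, the spectrum of $A$ near $\{\Re\lambda=-\sigma\}$ is bounded and has no cluster point there, so in fact $s(A_-)<-\sigma$ strictly; hence $\|e^{tA_-}\|_{\mathcal{L}(\mathbb{H}_-)}\leq Ce^{-\sigma_-t}$ and $\|e^{tA_-}g\|_{\mathbb{H}}\leq Ct^{-\beta}e^{-\sigma_-t}\|g\|_{\mathbb{H}_{-\beta}}$ for $g\in(\Id-P_+)\mathbb{H}_{-\beta}$, for some $\sigma_->\sigma$ and every $\beta\in[0,1)$. I would then check, by the standard argument, that \eqref{UCstab} transfers to $(A_+,B_+)$: a nonzero eigenvector $w$ of $A_+^*$ belongs to $\mathcal{D}(A^*)$, is an eigenvector of $A^*$ with real part $\geq-\sigma$, and satisfies $B_+^*w=B^*w$; so $(A_+,B_+)$ is controllable, and so is the ``Artstein pair'' $(A_+,e^{-\tau A_+}B_+)$, since $e^{-\tau A_+^*}$ acts as a nonzero scalar on $w$.

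Next — the heart of the proof — I would construct the feedback. Applying the finite-rank pole-assignment construction with a minimal number of controls (see \cite{BT2014} and the references therein) to the controllable pair $(A_+,e^{-\tau A_+}B_+)$ gives $\zeta_k\in\mathrm{Range}(P_+^*)\subset\mathcal{D}(A^*)$ and $v_k\in B^*(\mathcal{D}(A^*))$, $k=1,\dots,N_+$, such that the feedback $\mathcal{K}\eta:=\sum_k(\eta,\zeta_k)_{\mathbb{H}}v_k$ places the spectrum of $A_+^{\rm cl}:=A_++e^{-\tau A_+}B_+\mathcal{K}$ inside $\{\Re\lambda\leq-\sigma_1\}$ for a fixed $\sigma_1>\sigma$. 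For a trajectory $z$ of \eqref{ijt000} (with $z_\pm=P_\pm z$), the Artstein variable $y(t):=z_+(t)+\int_{t-\tau}^t e^{(t-\tau-s)A_+}B_+v(s)\,ds$ satisfies $y'=A_+y+e^{-\tau A_+}B_+v+f_+$; I would then define the control by $v(t):=\mathds{1}_{[\tau,\infty)}(t)\,\mathcal{K}\widehat{y}(t)$, where $\widehat{y}$ is the $f$-free reconstruction obtained by replacing $z_+(t)$ in $y$ with $e^{\tau A_+}z_+(t-\tau)+\int_{t-\tau}^t e^{(t-s)A_+}B_+v(s)\,ds$. Solving this definition recursively on the intervals $[n\tau,(n+1)\tau)$ — on $[0,\tau)$ one has $v\equiv0$ — expresses $v(t)$ through the values of $z_+$ on $[0,t-\tau]$ and, after absorbing the finitely many bounded factors $e^{rA_+}$, the operators $B_+$ and $P_+^*$ into the coefficients, gives exactly \eqref{fb1} with $K\in L^\infty_{\rm loc}(\mathbb{R}^2;\mathcal{L}(\mathbb{H}))$ — locally bounded because on every bounded time interval the recursion terminates after finitely many steps. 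Since $\zeta_k\in\mathrm{Range}(P_+^*)$ and $K$ factors through $P_+$, the control depends only on $z_+|_{[0,t-\tau]}$, so the unstable subsystem $z_+'=A_+z_++B_+v+f_+$ is closed. The delicate point is precisely this combination of the Artstein reduction with the finite-rank construction so that the feedback comes out in the exact form \eqref{fb1}, with $\zeta_k\in\mathcal{D}(A^*)$, $v_k\in B^*(\mathcal{D}(A^*))$ and a locally bounded kernel.

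Finally I would run the estimates. Well-posedness of the closed loop follows by iterating the variation-of-constants formula on $[n\tau,(n+1)\tau)$, where $v$ is a known $\mathbb{U}$-valued $L^2$ function (by $K\in L^\infty_{\rm loc}$ and the regularity of $z$ on the previous interval). Writing $\widehat{y}=y-e_1$ with $e_1(t):=\int_{t-\tau}^t e^{(t-s)A_+}f_+(s)\,ds$, one gets $y'=A_+^{\rm cl}y+f_+-e^{-\tau A_+}B_+\mathcal{K}e_1$; since $f_{+,\sigma}\in L^2$ and $\|e_1(t)\|\leq C\int_{t-\tau}^t\|f_+(s)\|\,ds$ gives $e_{1,\sigma}\in L^2\cap L^\infty$, the $\sigma$-weighted ODE for $y_\sigma$ has a stable matrix (rate $\sigma_1-\sigma>0$) and an $L^2$ right-hand side, whence $y_\sigma\in L^2\cap L^\infty\cap H^1$, and then $v_\sigma\in L^2\cap L^\infty$ and $z_{+,\sigma}\in L^2\cap L^\infty$, all controlled by $\|z^0\|_{\mathbb{H}}+\|f\|_{L^2_\sigma(0,\infty;\mathbb{H}_{-\gamma'})}$; in particular $z_+$ decays at rate $\sigma$. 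For $z_-=e^{tA_-}z_-^0+\int_0^t e^{(t-s)A_-}(B_-v+f_-)\,ds$ I would use the bounds on $e^{tA_-}$ and $\sigma_->\sigma$ to reduce the $\sigma$-weighted term to $\int_0^t r^{-\beta}e^{-(\sigma_--\sigma)r}\big(\|v_\sigma(t-r)\|_{\mathbb{U}}+\|f_{-,\sigma}(t-r)\|_{\mathbb{H}_{-\gamma'}}\big)\,dr$, taking $\beta=\gamma'<1/2$ for the source term (kernel in $L^2$, Cauchy--Schwarz with $f_{-,\sigma}\in L^2$) and $\beta=\gamma<1$ for the control term (kernel in $L^p$ with $p<1/\gamma$, Hölder with $v_\sigma\in L^2\cap L^\infty$); this gives the rate-$\sigma$ decay of $z_-$ and hence \eqref{ijt008}. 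For the last assertion, with $\gamma=\gamma'=0$ and $z^0\in\mathbb{H}_{1/2}$ one has $Bv+f\in L^2_\sigma(0,\infty;\mathbb{H})$; decomposing once more $z_\sigma=z_{+,\sigma}+z_{-,\sigma}$, maximal $L^2$-regularity on $(0,\infty)$ for the exponentially stable $A_-+\sigma$, together with \eqref{H3} to identify the trace space as $\mathbb{H}_{1/2}$, puts $z_{-,\sigma}$ in $L^2(0,\infty;\mathbb{H}_1)\cap C^0([0,\infty);\mathbb{H}_{1/2})\cap H^1(0,\infty;\mathbb{H})$, while the finite-dimensional $z_{+,\sigma}\in L^2\cap L^\infty$ has $z_{+,\sigma}'\in L^2$ by its ODE and hence the same membership; summing yields \eqref{ijt200}. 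Besides the feedback construction, the remaining technical nuisance is this fractional-power bookkeeping, where the split $\gamma<1$ versus $\gamma'<1/2$ is essential.
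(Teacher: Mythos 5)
Your overall architecture coincides with the paper's: spectral splitting via $P_+$, an Artstein-type transformation to remove the delay on the finite-dimensional unstable part, the Fattorini--Hautus test transferred to the pair $(A_+,e^{-\tau A_+}B_+)$ with a rank-$N_+$ feedback, the smoothing estimates for $e^{tA_-}$ with the split $\gamma<1$ (control term) versus $\gamma'<1/2$ (source term), and maximal regularity plus \eqref{H3} for \eqref{ijt200}. Those parts are sound. The difficulty is in the central step. Your Artstein variable is oriented the wrong way for the system you are closing: with the state equation $z_+'=A_+z_++B_+v+f_+$ coming from \eqref{ijt000} (which is also what you use when you say ``the unstable subsystem $z_+'=A_+z_++B_+v+f_+$ is closed''), the variable $y(t)=z_+(t)+\int_{t-\tau}^te^{(t-\tau-s)A_+}B_+v(s)\,ds$ does \emph{not} satisfy $y'=A_+y+e^{-\tau A_+}B_+v+f_+$; differentiating leaves the uncancelled terms $B_+v(t)-B_+v(t-\tau)$, so the closed-loop equation you then analyse is not the one your pole placement stabilizes. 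Your backward integral is the transform adapted to the delayed-input model $z'=Az+Bv(t-\tau)+f$, which is not the model of the theorem (and if you silently switch to that model you must re-index the feedback at the end, which you do not do). The transform that fits \eqref{ijt000} is the forward one used in the paper, $w(t)=z_+(t)+\int_t^{t+\tau}e^{(t-s)A_+}B_+p_+v(s)\,ds$, which gives $w'=A_+w+e^{-\tau A_+}B_+p_+v(t+\tau)+P_+f$ and is closed by $v(t+\tau)=Gw(t)$.

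The second issue is the construction of the kernel $K$. Whichever orientation you adopt, the defining relation $v(t)=\mathcal{K}\widehat y(t)$ (equivalently $v(t)=Gw(t-\tau)$) makes $v(t)$ depend on $v(s)$ for $s\in(t-\tau,t)$, part of which lies in the \emph{current} window $[n\tau,(n+1)\tau)$; the interval-by-interval recursion therefore does not ``terminate after finitely many steps'' --- on each window you still face a linear Volterra equation of the second kind for $v$ (equivalently for $K$). This is precisely what the paper isolates as \cref{L2}: $K$ is the unique solution of the Volterra equation \eqref{k3}, produced by a fixed-point argument with the factorial bound $\|\Phi^nK\|\le \frac{t^n}{n!}\|K_0\|^n\|K\|$, which is also what delivers $K\in L^\infty_{\rm loc}$ and hence the exact form \eqref{fb1}. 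You need to supply this argument (or the equivalent Neumann-series/resolvent-kernel computation) rather than assert termination. Once the transform is reoriented and this Volterra step is made explicit, your $\widehat y=y-e_1$ bookkeeping and all remaining estimates go through and essentially reproduce the paper's proof.
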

Here and in all what follows, $\mathds{1}_{\mathcal{O}}$ is the characteristic function of the set $\mathcal{O}$.

The above result shows that we can stabilize the above general class of linear parabolic systems with a finite number of controls and with a constant delay: the feedback control $v(t)$ 
at time $t$, given by \eqref{fb1}, only depends on value of the state $z(s)$ for $s\leq t-\tau$. 
This result can be seen as a generalization of several recent results on the stabilization of parabolic systems with delay control, in particular
\cite{MR3767485} where the authors constructed a feedback control for finite dimensional linear systems, and
\cite{MR3936420} where the authors obtained a stabilizing feedback control of a one-dimensional reaction-diffusion equation with a boundary control subjected to a constant delay. Let us mention some ideas of their method that we adapt to prove our result: using that their operator is 
self-adjoint of compact resolvent they split the system into an unstable finite-dimensional part and a stable infinite-dimensional part. They are thus led to stabilize the 
finite-dimensional unstable system and to do this with a delay, they use the Artstein transformation and obtain an autonomous control system without delay satisfying the Kalman condition. Finally, by using an appropriate Lyapunov function, they prove that the feedback control designed in the finite-dimensional part actually stabilizes the whole system.

We can mention several articles in this direction: in \cite{lhachemi2019boundary}, the authors consider the stabilization of a 
structurally damped Euler-Bernoulli beam. The corresponding system is parabolic but the main operator is no more self-adjoint.
Then \cite{lhachemi2019feedback} generalizes the result 
of \cite{MR3936420} in the case where the main operator is a Riesz spectral operator with simple eigenvalues. 
In \cite{Lhachemi_2020} they manage to extend the result of \cite{MR3936420} 
to the case where the control contains some disturbances and where the delay can depend on time. 


Here our aim is to extend the result of \cite{MR3936420} for a large class of parabolic systems, and in particular with the possibility to consider partial differential equations in dimension larger than one. We also precise the number of controls $N_+$ needed to stabilize the system by using the approach developed in  \cite{MB1} in the case of the Navier-Stokes system or in  \cite{BT2014}, for general linear and nonlinear parabolic systems. We present two important examples, that is the reaction-diffusion equation and the Oseen system and we end this paper to show that within this framework, we can also handle some nonlinear parabolic systems such as the Navier-Stokes system.

The present paper is organized as follows. In \cref{sec_proof}, the proof of \cref{thmain} is given. As in \cite{MR3936420}, it relies on the decomposition of the system \eqref{ijt000} into two parts: an unstable finite-dimensional part and an infinite-dimensional part. 
This decomposition is possible thanks to \eqref{H1} and \cite[Theorem 6.17, p.178]{Kato1966}. 
Due to the presence of a constant delay, an equivalent autonomous control system is considered for the finite-dimensional part by means of the Artstein transformation.
This system is exponentially stabilizable by using \eqref{UCstab}. Using the inverse of the Artstein transform, a stabilizing feedback control is designed in the finite-dimensional space that stabilizes exponentially the finite-dimensional unstable system (with delay control).
Finally, we prove that the designed feedback stabilizes exponentially the complete system. 
Thereafter, we illustrate our results by some precise examples: 
the case of the feedback stabilization of the $N$-dimensional linear convection-diffusion equation with $N\geq 1$ with delay boundary  control in \cref{sec_heat}, 
the case of the feedback stabilization of the Oseen system with delay distributed control in \cref{sec_oseen} 
and finally, a local feedback distributed stabilization of the Navier-Stokes system around a stationary state in \cref{sec_navier}.

\section{Proof of \cref{thmain}}\label{sec_proof}
Let us consider $\sigma>0$.
We first decompose the spectrum of $A$ into the ``unstable'' modes and the ``stable'' modes:
\begin{equation}\label{ijt005}
\Sigma_+:= \{\lambda_j \ ; \ \Re \lambda_j \geq -\sigma\},
\quad
\Sigma_-:= \{\lambda_j \ ; \ \Re \lambda_j < -\sigma\}.
\end{equation}
Using that $(e^{tA})_{t\geq 0}$ is an analytic semigroup (see \cite[Theorem 2.11, p.112]{BDDM}) and \eqref{H1}, we see that $\Sigma_+$ is of finite cardinal.

Thus, we can introduce the projection operator (see \cite[Thm. 6.17, p.178]{Kato1966}) defined by 
\begin{equation}
P_+:= \frac{1}{2\pi \imath}\int_{\Gamma_+} (\lambda -A)^{-1} \ d\lambda,
\end{equation}
where $\Gamma_+$ is a contour enclosing $\Sigma_+$ but no other point of the spectrum of $A$. 
We can define 
$$
\mathbb{H}_+:=P_+ \mathbb{H}, \quad \mathbb{H}_-:=(\Id-P_+) \mathbb{H}.
$$
From \cite[Thm. 6.17, p.178]{Kato1966}, we have $\mathbb{H}_+ \oplus \mathbb{H}_-=\mathbb{H}$ and if we set
$$
A_+:=A_{|\mathbb{H}_+} : \mathbb{H}_+ \to \mathbb{H}_+, \quad 
A_-:=A_{|\mathbb{H}_-} : \mathcal{D}(A)\cap \mathbb{H}_- \to \mathbb{H}_-, 
$$
then the spectrum of $A_+$ (resp. $A_-$) is exactly $\Sigma_+$ (resp. $\Sigma_-$). 
By using the analyticity of $\left(e^{A t}\right)_{t\geq 0}$, \eqref{H1} and \eqref{ijt005}, we deduce the existence of $\sigma_- >\sigma$ such that
\begin{equation}\label{ijt006}
\left\| e^{A_-t} \right\|_{\mathcal{L}(\mathbb{H}_-)} \leq Ce^{-\sigma_- t},
\quad
\left\| (\lambda_0-A)^{\gamma} e^{A_-t} \right\|_{\mathcal{L}(\mathbb{H}_-)} \leq C\frac{1}{t^\gamma} e^{-\sigma_- t}.
\end{equation}

We can proceed similarly for $A^*$: we write
\begin{equation}
P_+^*:= \frac{1}{2\pi \imath}\int_{\overline{\Gamma_+}} (\lambda -A^*)^{-1} \ d\lambda,
\end{equation}
$$
\mathbb{H}_+^*:=P_+^* \mathbb{H}, \quad \mathbb{H}_-^*:=(\Id-P_+^*) \mathbb{H},
$$
$$
A_+^*:=A_{|\mathbb{H}_+^*} : \mathbb{H}_+^* \to \mathbb{H}_+^*, \quad 
A_-^*:=A_{|\mathbb{H}_-^*} : \mathcal{D}(A^*)\cap \mathbb{H}_-^* \to \mathbb{H}_-^*. 
$$
Note that $P_+^*$ is the adjoint of $P_+$. In particular, we see that if $z\in \mathbb{H}_-$ and $\zeta\in  \mathbb{H}_+^*$,
then
\begin{equation}\label{oubli1}
\left(z,\zeta\right)_{\mathbb{H}}=\left((\Id-P_+)z,\zeta\right)_{\mathbb{H}}=\left(z,(\Id-P_+^*)\zeta\right)_{\mathbb{H}}=0.
\end{equation}

We also define
$$
\mathbb{U}_+:=B^* \mathbb{H}_+^*, \quad
\mathbb{U}_-:=B^* \mathbb{H}_-^*,
$$
and
$$
p_+ : \mathbb{U} \to \mathbb{U}_+, \quad
p_- : \mathbb{U} \to \mathbb{U}_-, \quad
i_+ : \mathbb{U}_+ \to \mathbb{U}, \quad
i_- : \mathbb{U}_- \to \mathbb{U},
$$
the orthogonal projections and the inclusion maps. Then we write
$$
B_+:=P_+ B i_+ \in \mathcal{L}(\mathbb{U}_+,\mathbb{H}_+), 
\quad
B_-:=(\Id-P_+) B i_- \in \mathcal{L}(\mathbb{U}_-,\mathbb{H}_-\cap \mathbb{H}_{-\gamma}). 
$$
We have that
$$
P_+B=B_+p_+
\quad
(\Id-P_+)B=B_-p_-.
$$
Indeed, for any $w\in \mathcal{D}(A)$ and $ v\in \mathbb{U}$  we can write
$$
\left\langle P_+Bv,w\right\rangle_{\mathbb{H}_{-1},\mathbb{H}_{1}}
=\left\langle v, B^*P^*_+ w \right\rangle_\mathbb{U} 
=\left\langle p_+v, p_+B^*P^*_+ w \right\rangle_\mathbb{U}
=\left\langle P_+B i_+p_+v,  w \right\rangle_{\mathbb{H}_{-1},\mathbb{H}_{1}}
=\left\langle B_+p_+v,  w \right\rangle_{\mathbb{H}_{-1},\mathbb{H}_{1}},
$$
and we can prove similarly the other relation.

From the above notation, we can split \eqref{ijt000} into the two equations (see \cite{BT2014,RaymondThevenet2009}). 
\begin{equation}\label{ijt001}
z_+'=A_+ z_+ + B_+ p_+ v + P_+f, \quad z_+(0)=P_+z^0,
\end{equation}
\begin{equation}\label{ijt002}
z_-'=A_- z_- + B_- p_- v + (\Id-P_+)f, \quad z_-(0)=(\Id-P_+)z^0.
\end{equation}
In order to study the stabilization of the finite-dimensional system \eqref{ijt001}, we use the 
Artstein transformation that allows us to pass from \eqref{ijt001} in the case of a delay input to an autonomous system.
More precisely, we consider 
$$
w(t):=z_+(t)+\int_t^{t+\tau} e^{(t-s)A_+} B_+ p_+ v(s) \ ds.
$$
Then in what follows, we study the stabilization of the autonomous system satisfied by $w$ (\cref{L1}). Since the corresponding feedback is expressed with $w$, we also
consider the inverse of the Artstein transformation and more precisely show the existence of a kernel $K$ to write $w$ in terms of $z_+$ (\cref{L2}).
\begin{Lemma}\label{L1}
Assume \eqref{UCstab} for $\sigma>0$. 
Then, there exist $C>0$ and $G\in\mathcal{L}(\mathbb{H}_+,\mathbb{U}_+)$, with $\rank G\leq N_+$ 
where $N_+$ is defined by \eqref{Nc},
such that 
for any $f\in L_\sigma^2(0,\infty;\mathbb{H}_{-\gamma'})$ and $w^0 \in \mathbb{H}_+$, 
the solution of
\begin{equation}
\label{eq12}
\left\{
\begin{array}{c}
w'=A_+w+e^{-\tau A_+}B_+p_+ Gw+P_+f,\\
w(0)=w^0 ,
\end{array}
\right.
\end{equation}
satisfies
\begin{equation}
\label{stab1}
\|w\|_{H^1_\sigma(0,\infty;\mathbb{H}_+)} \leq C\left(\|w^0\|_{\mathbb{H}_+} + \|P_+ f\|_{L^2_{\sigma}(0,\infty;\mathbb{H}_+)}\right).
\end{equation}
\end{Lemma}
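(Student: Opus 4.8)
The plan is to stabilize the finite-dimensional system \eqref{eq12} in two stages: first observe that it is an autonomous finite-dimensional linear control system, then verify the relevant Kalman/Hautus condition using \eqref{UCstab}, and finally handle the source term $P_+f$ by a standard variation-of-constants plus exponential-weight argument. Concretely, $\mathbb{H}_+$ is finite-dimensional (since $\Sigma_+$ has finite cardinal and each eigenvalue has finite algebraic multiplicity by \eqref{H1}), so $A_+\in\mathcal{L}(\mathbb{H}_+)$ and the operator $\widetilde B_+:=e^{-\tau A_+}B_+p_+\in\mathcal{L}(\mathbb{U}_+,\mathbb{H}_+)$ is a genuine finite-rank operator; the pair $(A_+,\widetilde B_+)$ is exactly what comes out of the Artstein transformation applied to \eqref{ijt001}.

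First I would show that $(A_+,\widetilde B_+)$ is controllable, i.e. satisfies the Hautus test: if $\varepsilon\in\mathbb{H}_+^*$ satisfies $A_+^*\varepsilon=\overline\lambda\varepsilon$ and $\widetilde B_+^*\varepsilon=0$, then $\varepsilon=0$. Since $e^{-\tau A_+}$ is invertible on $\mathbb{H}_+$, the condition $\widetilde B_+^*\varepsilon = p_+^*B_+^*e^{-\tau A_+^*}\varepsilon = 0$ is equivalent (after replacing $\varepsilon$ by the eigenvector $e^{-\tau A_+^*}\varepsilon$, which is again an eigenvector for the same eigenvalue) to $B_+^*\varepsilon'=0$ where $\varepsilon'\in\mathbb{H}_+^*$ is an eigenvector of $A_+^*$. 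Extending $\varepsilon'$ by $0$ on $\mathbb{H}_-^*$ and using that the full eigenvector of $A^*$ lies in $\mathbb{H}_+^*$ with $B^*$ acting through $p_+$ (the splitting relations $P_+B=B_+p_+$ dualize to $B^*P_+^*=i_+^*p_+^*\cdots$), one sees $A^*\varepsilon'=\overline\lambda\varepsilon'$ with $\Re\overline\lambda=\Re\lambda\geq-\sigma$ and $B^*\varepsilon'=0$, so \eqref{UCstab} forces $\varepsilon'=0$ hence $\varepsilon=0$. Controllability then yields, by pole-shifting (or by the Brunovsky canonical form / Bhattacharyya–de Souza type construction as in \cite{BT2014, MB1}), a feedback $G\in\mathcal{L}(\mathbb{H}_+,\mathbb{U}_+)$ such that $A_+ + \widetilde B_+ G$ has all its eigenvalues with real part strictly less than $-\sigma$; the rank bound $\rank G\leq N_+$ comes from the fact that one can realize the stabilizing feedback using only $N_+$ scalar inputs (one per block of largest geometric multiplicity among the unstable modes), exactly as in the no-delay constructions.

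Once $\|e^{t(A_++\widetilde B_+G)}\|_{\mathcal{L}(\mathbb{H}_+)}\leq Ce^{-\sigma' t}$ for some $\sigma'>\sigma$, the estimate \eqref{stab1} is routine: write $w(t)=e^{t(A_++\widetilde B_+G)}w^0+\int_0^t e^{(t-s)(A_++\widetilde B_+G)}P_+f(s)\,ds$, multiply by $e^{\sigma t}$, use $\|e^{\sigma t}w(t)\|\leq Ce^{-(\sigma'-\sigma)t}\|w^0\|+C\int_0^t e^{-(\sigma'-\sigma)(t-s)}\|e^{\sigma s}P_+f(s)\|\,ds$, and apply Young's convolution inequality to get the $C^0_\sigma$ and $L^2_\sigma$ bounds; the $H^1_\sigma$ bound then follows from the equation $w'=(A_++\widetilde B_+G)w+P_+f$ since in finite dimensions $A_++\widetilde B_+G$ is bounded and $P_+f\in L^2_\sigma$. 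Here $P_+f$ makes sense and lies in $L^2_\sigma(0,\infty;\mathbb{H}_+)$ because $P_+$ extends to a bounded operator from $\mathbb{H}_{-\gamma'}$ onto $\mathbb{H}_+$ (the finite-dimensional spectral subspace consists of smooth vectors), so $\|P_+f\|_{L^2_\sigma(0,\infty;\mathbb{H}_+)}\leq C\|f\|_{L^2_\sigma(0,\infty;\mathbb{H}_{-\gamma'})}$.

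The main obstacle I anticipate is not the semigroup estimate but the careful bookkeeping in the Hautus step: one must track how the eigenvectors of $A^*$ decompose under $P_+^*$, check that $B^*$ restricted to $\mathbb{H}_+^*$ really is $p_+^*$ composed with $B_+^*$ (using the dualized splitting identities), and confirm that the Artstein-induced factor $e^{-\tau A_+}$, being invertible, neither helps nor hurts controllability. A secondary point requiring care is the rank count $\rank G\leq N_+$: this needs the refined controllability-with-few-inputs argument (projecting onto one eigenvector per geometric eigenspace and invoking \eqref{UCstab} to get simultaneous controllability of the reduced system), which is where the precise value $N_+$ from \eqref{Nc} enters, exactly as in \cite{BT2014}.
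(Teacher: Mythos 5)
Your proposal is correct and follows essentially the same route as the paper: verify the Fattorini--Hautus condition for the pair $(A_+, e^{-\tau A_+}B_+p_+)$ by reducing it, via the eigenvector relation $e^{-\tau A_+^*}\varepsilon=e^{-\tau\overline{\lambda}}\varepsilon$, to \eqref{UCstab}, then invoke the standard finite-dimensional stabilization result with the rank bound $\rank G\leq N_+$ (as in \cite{BT2014}), and conclude by Duhamel's formula with the exponential weight. Your additional remarks on the boundedness of $P_+$ from $\mathbb{H}_{-\gamma'}$ to $\mathbb{H}_+$ and on the invertibility of $e^{-\tau A_+}$ are accurate and only make explicit what the paper leaves implicit.
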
 
\begin{proof}
Let us consider $\sigma_\star>\sigma.$
First we notice that $(A_+,e^{-\tau A_+}B_+p_+)$ satisfies the Fattorini-Hautus test:
assume 
$$
A_+^* \varepsilon =\overline{\lambda_j} \varepsilon, \quad
B_+^* e^{-\tau A_+^*} \varepsilon=0.
$$
Then we deduce
$$
A^*\varepsilon =\overline{\lambda_j} \varepsilon, \quad
B_+^* e^{-\tau A_+^*} \varepsilon
=e^{-\tau \overline{\lambda_j} } B^* P_+^*  \varepsilon
=e^{-\tau \overline{\lambda_j} } B^* \varepsilon=0.
$$
Thus from \eqref{UCstab}, we deduce $\varepsilon=0$. We can thus use the standard result of Fattorini or Hautus
(see also \cite[Theorem 1.6]{BT2014}) to deduce that there exists $G\in\mathcal{L}(\mathbb{H}_+,\mathbb{U}_+)$, with $\rank G\leq N_+$ 
such that the solution of 
\begin{equation}
\label{eq12-2}
\left\{
\begin{array}{c}
w'=A_+w+e^{-\tau A_+}B_+ p_+ Gw,\\
w(0)=w^0 \in \mathbb{H}_+,
\end{array}
\right.
\end{equation}
satisfies
\begin{equation}
\label{stab1-2}
\left\|w(t)\right\|_{\mathbb{H}_+} \leq Ce^{-\sigma_\star t}\left\|w^0 \right\|_{\mathbb{H}_+} , \quad t\geq 0.
\end{equation}
Then we write the Duhamel formula for the solutions of \eqref{eq12} 
$$
w(t)=e^{A_\star t} w^0 +\int_0^t e^{A_\star(t-s)} P_+f(s) \ ds,
$$
with 
$$
A_\star=A_+ + e^{-\tau A_+}B_+ p_+ G,
$$
to deduce \eqref{stab1}.
\end{proof}

\begin{Lemma}\label{L2}
There exists $K\in L^\infty_{\rm loc}(\mathbb{R}^2;\mathcal{L}(\mathbb{H}_+))$ such that
\begin{multline}
\label{k3}
K(t,s)= e^{(t-s-\tau)A_+}B_+p_+G\mathds{1}_{(\max\{t-\tau,0\},t)} (s) \\+ \int_{\max\{t-\tau,s\}}^te^{(t-\xi-\tau)A_+}B_+p_+GK(\xi,s) \ d\xi 
\quad (t>0, s\in (0,t)).
\end{multline}
\end{Lemma}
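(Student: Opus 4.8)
The plan is to read \eqref{k3} as a linear Volterra-type integral equation for $K(\cdot,s)$, with $s\in(0,t)$ playing the role of a parameter, and to solve it by a Neumann-series / contraction argument on bounded time horizons, then to glue the horizons together. The structural fact to exploit first is that $\Sigma_+$ is finite, hence $\mathbb{H}_+$ is finite-dimensional; therefore $A_+\in\mathcal{L}(\mathbb{H}_+)$, the family $(e^{rA_+})_{r\in\mathbb{R}}$ is a uniformly continuous group of bounded operators, and in particular
$$
M:=\sup_{r\in[-\tau,0]}\left\|e^{rA_+}B_+p_+G\right\|_{\mathcal{L}(\mathbb{H}_+)}<\infty .
$$
This makes every operator occurring in \eqref{k3} globally bounded, so all the estimates below are elementary.

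Next I would fix a horizon $T>0$, set $\Omega_T:=\{(t,s)\in\mathbb{R}^2\ ;\ 0<s<t<T\}$, and work in the Banach space $X_T:=L^\infty(\Omega_T;\mathcal{L}(\mathbb{H}_+))$. I would write \eqref{k3} as $K=K_0+\Lambda K$, where $K_0(t,s):=e^{(t-s-\tau)A_+}B_+p_+G\,\mathds{1}_{(\max\{t-\tau,0\},t)}(s)$ is the explicit source term and $\Lambda$ is the linear operator $(\Lambda K)(t,s):=\int_{\max\{t-\tau,s\}}^{t}e^{(t-\xi-\tau)A_+}B_+p_+G\,K(\xi,s)\,d\xi$. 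One checks that $t-s-\tau\in(-\tau,0)$ on the support of the indicator and that $t-\xi-\tau\in[-\tau,0]$ for $\xi\in[\max\{t-\tau,s\},t]$, which gives $\|K_0\|_{X_T}\le M$, while $(\xi,s)\in\Omega_T$ for $\xi$ in the range of integration, so $\Lambda$ maps $X_T$ into itself. The heart of the argument is then the iterated bound
$$
\left\|(\Lambda^n K)(t,s)\right\|_{\mathcal{L}(\mathbb{H}_+)}\le\frac{M^n(t-s)^n}{n!}\,\|K\|_{X_T}\le\frac{(MT)^n}{n!}\,\|K\|_{X_T},\qquad n\ge 0,
$$
obtained by induction on $n$ precisely because the domain of integration $(\max\{t-\tau,s\},t)$ is contained in $(s,t)$, so the step from $n$ to $n+1$ costs only $M\int_s^t\frac{M^n(\xi-s)^n}{n!}\,d\xi=\frac{M^{n+1}(t-s)^{n+1}}{(n+1)!}$ (equivalently, $(\Lambda^nK)(t,s)$ is an $n$-fold integral over the simplex $\{s<\xi_n<\dots<\xi_1<t\}$, of volume $(t-s)^n/n!$). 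Hence $\sum_{n\ge0}\Lambda^n$ converges in $\mathcal{L}(X_T)$, $\Id-\Lambda$ is boundedly invertible, and $K:=(\Id-\Lambda)^{-1}K_0=\sum_{n\ge0}\Lambda^nK_0\in X_T$ is the unique element of $X_T$ solving \eqref{k3} on $\Omega_T$. For $T'<T$ the restriction to $\Omega_{T'}$ of the solution on $\Omega_T$ again solves \eqref{k3}, so uniqueness forces consistency; letting $T\to\infty$ produces a single $K\in L^\infty_{\rm loc}(\{0<s<t\};\mathcal{L}(\mathbb{H}_+))$ satisfying \eqref{k3} for every $t>0$ and $s\in(0,t)$, and extending it by $0$ on the rest of $\mathbb{R}^2$ gives $K\in L^\infty_{\rm loc}(\mathbb{R}^2;\mathcal{L}(\mathbb{H}_+))$ as claimed.

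I do not expect a genuine obstacle: the finite-dimensionality of $\mathbb{H}_+$ removes every analytic difficulty, and what remains is a standard linear Volterra fixed point. The only mildly technical points are the bookkeeping with the indicator $\mathds{1}_{(\max\{t-\tau,0\},t)}$ and with the lower integration limit $\max\{t-\tau,s\}$ — in particular the edge cases $t\le\tau$ versus $t>\tau$, where the indicator equals $1$ on the first delay strip $\{t-s<\tau\}$ and vanishes afterwards, and the reduction $\max\{\max\{t-\tau,0\},s\}=\max\{t-\tau,s\}$ for $s>0$ used when interchanging the order of integration — together with the inductive verification of the displayed $1/n!$ estimate, which is what makes the Neumann series converge for every $T$. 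For later use one should also note that this $K$ is exactly the kernel inverting the Artstein transformation: with the feedback $v(s)=\mathds{1}_{[\tau,\infty)}(s)\,Gw(s-\tau)$, equation \eqref{k3} is precisely the identity ensuring that $w(t)=z_+(t)+\int_0^t K(t,s)z_+(s)\,ds$.
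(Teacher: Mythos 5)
Your proposal is correct and follows essentially the same route as the paper: both read \eqref{k3} as a Volterra-type equation $K=K_0+\Lambda K$ on $L^\infty$ of the triangle $\{0<s<t<T\}$ and establish the factorial bound $\|\Lambda^n\|\le (CT)^n/n!$, the paper concluding via ``$\Phi^n$ is a strict contraction for $n$ large'' and you via the equivalent Neumann series $\sum_n\Lambda^nK_0$. The only differences are cosmetic (your $(t-s)^n/n!$ versus the paper's $t^n/n!$, and your explicit gluing over horizons $T$, which the paper leaves implicit by taking $T$ arbitrary).
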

\begin{proof}
The proof relies on a fixed point argument.
We set 
$$
K_0(t):=e^{(t-\tau)A_+}B_+p_+G, \quad K_0 \in L^\infty(0,\tau;\mathcal{L}(\mathbb{H_+})),
$$
so that \eqref{k3} writes
$$
K(t,s)=K_0(t-s)\mathds{1}_{(\max\{t-\tau,0\},t)} (s) 
+ \int_{\max\{t-\tau,s\}}^t K_0(t-\xi) K(\xi,s) \ d\xi.
$$

Let $T>0$, and let us define
$$
D_T=\{(t,s)\in\mathbb{R}^2,\quad t\in (0,T),\quad s\in (0,t)\}	,
$$
and
$$
\Phi:  L^\infty(D_T;\mathcal{L}(\mathbb{H_+}))\rightarrow L^\infty(D_T;\mathcal{L}(\mathbb{H_+})),
$$
$$
(\Phi K)(t,s)=\int_{\max\{t-\tau,s\}}^tK_0(t-\xi)K(\xi,s) \ d\xi \quad ((t,s)\in D_T).
$$
The mapping $\Phi$ is well-defined, and is a linear and bounded operator of $L^\infty(D_T;\mathcal{L}(\mathbb{H_+}))$. Moreover,
$$
\left\| (\Phi K)(t,s) \right\|_{\mathcal{L}(\mathbb{H}_+)} \leq t \left\|K_0 \right\|_{L^\infty(0,\tau;\mathcal{L}(\mathbb{H_+}))}
\left\|K \right\|_{L^\infty(D_T;\mathcal{L}(\mathbb{H_+}))}.
$$
This yields 
$$
\left\| (\Phi^2 K)(t,s) \right\|_{\mathcal{L}(\mathbb{H}_+)} \leq \frac{t^2}{2} \left\|K_0 \right\|_{L^\infty(0,\tau;\mathcal{L}(\mathbb{H_+}))}^2
\left\|K \right\|_{L^\infty(D_T;\mathcal{L}(\mathbb{H_+}))},
$$
and by induction
$$
\left\| (\Phi^n K)(t,s) \right\|_{\mathcal{L}(\mathbb{H}_+)} \leq \frac{t^n}{n!} \left\|K_0 \right\|_{L^\infty(0,\tau;\mathcal{L}(\mathbb{H_+}))}^n
\left\|K \right\|_{L^\infty(D_T;\mathcal{L}(\mathbb{H_+}))} \quad (n\in \mathbb{N}^*).
$$
In particular, for $n$ large enough, $\Phi^n$ is a strict contraction and so is
$$
(\widetilde{\Phi} K)(t,s):= (\Phi K)(t,s) + K_0(t-s)\mathds{1}_{(\max\{t-\tau,0\},t)} (s).
$$
Thus, $\widetilde \Phi$ admits a unique fixed point which is a solution of \eqref{k3}.

\end{proof}

We are now in a position to prove the main result
\begin{proof}[Proof of \cref{thmain}]
We consider $G$ and $K(t,s)$ obtained in \cref{L1} and \cref{L2}, and we set
\begin{equation}
\label{fb12}
v(t)=\mathds{1}_{[\tau,+\infty)}(t)G\left[z_+(t-\tau)+\int_0^{t-\tau} K(t-\tau,s)z_+(s) \ ds \right].
\end{equation}
Note that, we can write $G$ as 
$$
G(\phi)=\sum_{k=1}^{N_+} \left(\phi,\zeta_k\right)_{\mathbb{H}} v_k, \quad (\phi\in \mathbb{H}_+)
$$
with $\zeta_k\in \mathbb{H}_+^*$ and $v_k\in \mathbb{U}_+$, $k=1,\ldots,N_+$.
We can take $\zeta_k\in \mathbb{H}_+^*$ due to a standard result in linear algebra: combining $\dim \mathbb{H}_+^*=\dim \mathcal{L}(\mathbb{H}_+,\mathbb{C})$
and \eqref{oubli1}, we can identify these two spaces. 
The interest to take $\zeta_k\in \mathbb{H}_+^*$ is that the above formula for $G$ can be applied to $\phi\in \mathbb{H}$ and extend $G$ as a linear bounded operator in $\mathbb{H}$ satisfying $G=0$ in $\mathbb{H}_-$ (see \eqref{oubli1}). Extending also the family $K$ by $K(t,s)=0$ in $\mathbb{H}_-$, we see that \eqref{fb12} can be written
as \eqref{fb1}.

Let us define 
\begin{equation}\label{eqW}
w(t):=z_+(t)+\int_0^{t} K(t,s)z_+(s) \ ds,
\end{equation}
so that \eqref{ijt001} can be written
\begin{equation}\label{eqZ+2}
\left\{
\begin{array}{cc}
z_+'(t)=A_+ z_+(t)+B_+p_+\mathds{1}_{[\tau,+\infty)}(t)G w(t-\tau)+P_+f(t) \quad t>0,\\
z_+(0)=P_+z^0.
\end{array}
\right.
\end{equation}

Then we use \eqref{k3}, \eqref{eqW} and the Fubini theorem to perform the following computation for $t>0$:
\begin{multline}
\label{calculs}
\int_{t}^{t+\tau}e^{(t-s)A_+}B_+p_+ Gw(s-\tau) \mathds{1}_{[\tau,+\infty)}(s)   \ ds
=
\int_{\max \{t-\tau,0\}}^{t}e^{(t-s-\tau)A_+}B_+p_+ Gw(s) \ ds
\\
=\int_{\max \{t-\tau,0\}}^{t}e^{(t-s-\tau)A_+}B_+p_+ G\left[ z_+(s)+\int_0^{s} K(s,\xi)z_+(\xi) \ d\xi\right] \ ds
\\
=\int_{0}^{t}
\left[\mathds{1}_{(\max\{t-\tau,0\},t)} (s) e^{(t-s-\tau)A_+}B_+p_+ G
+\int_{\max \{t-\tau,s\}}^{t} e^{(t-\xi-\tau)A_+}B_+p_+ G  K(\xi,s) \ d\xi
\right]z_+(s)  \ ds
\\
=\int_{0}^{t} K(t,s)z_+(s)  \ ds
=w(t)-z_+(t).
\end{multline} 
%

Consequently,
\begin{equation}\label{tempo}
w(t)=z_+(t)+\int_{t}^{t+\tau}e^{(t-s)A_+}B_+p_+ Gw(s-\tau) \mathds{1}_{[\tau,+\infty)}(s)   \ ds.
\end{equation}
%
%
From \eqref{eqZ+2}, we deduce that $w$ is solution of \eqref{eq12} with $w^0=z_+(0)$.
Thus $w$ satisfies \eqref{stab1} and from \eqref{tempo}, 
\begin{equation}
\label{etoile}
\|z_+\|_{H^1_{\sigma}(0,\infty;\mathbb{H}_+)} \leq C\left(\|P_+ z^0\|_{\mathbb{H}_+} + \|P_+f\|_{L^2_{\sigma}(0,\infty;\mathbb{H}_+)}\right).
\end{equation}
This yields in particular that if $z^0\in \mathbb{H}_{1/2}$ and if $f\in L^2_{\sigma}(0,\infty;\mathbb{H})$, then
\begin{equation}
\label{stab1-strong}
\|z_+\|_{L^2_\sigma (0,\infty;\mathbb{H}_1)\cap C^0_\sigma ([0,\infty);\mathbb{H}_{1/2})\cap H^1_\sigma (0,\infty;\mathbb{H})}
\leq C\left(\|z^0\|_{\mathbb{H}_{1/2}} + \|f\|_{L^2_{\sigma}(0,\infty;\mathbb{H})}\right).
\end{equation}

Then, we can consider the solution of \eqref{ijt002}: for $t\geq \tau$,
\begin{multline}
z_-(t)=e^{A_-t} (\Id-P_+)z^0
+\int_\tau^t (\lambda_0-A)^{\gamma} e^{A_-(t-s)} (\lambda_0-A)^{-\gamma} B_- p_- Gw(s-\tau) \ ds 
\\
+\int_0^t e^{A_-(t-s)} (\Id-P_+)f(s) \ ds.
\end{multline}
Using \eqref{ijt006} and \eqref{stab1}, we deduce that 
\begin{multline*}
\|z_-(t)\|_{\mathbb{H}} \leq Ce^{-\sigma_- t} \left\|z^0\right\|_{\mathbb{H}}
+C e^{-\sigma t} \int_\tau^t \frac{1}{(t-s)^\gamma} e^{-(\sigma_- -\sigma) (t-s)}\ ds \left(\|P_+ z^0\|_{\mathbb{H}_+} + \|P_+ f\|_{L^2_{\sigma}(0,\infty;\mathbb{H}_+)}\right)  
\\
+C e^{-\sigma t} \int_0^t  \frac{1}{(t-s)^{\gamma'}} e^{-(\sigma_- -\sigma) (t-s)} \left\| e^{\sigma s} f(s)\right\|_{\mathbb{H}_{-\gamma'}} \ ds.
\end{multline*}
Using that $\sigma_->\sigma$, $\gamma<1$ and $\gamma'<1/2$, we deduce from the above estimate that
$$
\|z_-(t)\|_{\mathbb{H}_-} \leq C e^{-\sigma t} \left(\|z^0\|_{\mathbb{H}} + \|f\|_{L^2_{\sigma}(0,\infty;\mathbb{H}_{-\gamma'})}\right)
\quad (t>0).
$$
Combing this with \eqref{etoile}, we deduce \eqref{ijt008}.

Let us prove now \eqref{ijt200}. 
If $f\in L^2_{\sigma}(0,\infty;\mathbb{H})$, $B\in \mathcal{L}(\mathbb{U},\mathbb{H})$ and if $z^0\in \mathbb{H}_{1/2}$, then the first part remains unchanged,
and we have \eqref{stab1-strong} and
\begin{equation}
\label{ijt100}
\|v\|_{L^2_{\sigma}(0,\infty;\mathbb{U})} \leq C\left(\|z^0\|_{\mathbb{H}_+} + \|f\|_{L^2_{\sigma}(0,\infty;\mathbb{H}_+)}\right).
\end{equation}
Consequently,
\begin{equation}\label{ijt101}
B_- p_- v + (\Id-P_+)f \in L^2_{\sigma}(0,\infty;\mathbb{H}_-),
\end{equation}
and
$$
z_-(0)=z^0-P_+z^0 \in \mathbb{H}_{1/2}\cap \mathbb{H}_-.
$$
Using that $A_-$ is the infinitesimal generator of an analytic semigroup of type smaller than $-\sigma$
(see, for instance \cite[Proposition 2.9, p.120]{BDDM}), then
$$
z_-\in L^2_\sigma (0,\infty;\mathbb{H}_1)\cap C^0_\sigma ([0,\infty);\mathbb{H}_{1/2})\cap H^1_\sigma (0,\infty;\mathbb{H}),
$$
and from \eqref{ijt100}
\begin{multline*}
\|z_-\|_{L^2_\sigma (0,\infty;\mathbb{H}_1)\cap C^0_\sigma ([0,\infty);\mathbb{H}_{1/2})\cap H^1_\sigma (0,\infty;\mathbb{H})}
\leq C\left(\|z^0-P_+z^0 \|_{\mathbb{H}_{1/2}} + \| B_- p_- v\|_{L^2_{\sigma}(0,\infty;\mathbb{H})} + \|(\Id-P_+)f\|_{L^2_{\sigma}(0,\infty;\mathbb{H})}\right)\\
\leq C\left(\|z^0\|_{\mathbb{H}_{1/2}} + \|f\|_{L^2_{\sigma}(0,\infty;\mathbb{H})}\right).
\end{multline*}
Combining this with \eqref{stab1-strong}, we deduce \eqref{ijt200}.
\end{proof}

\section{Feedback boundary stabilization of the convection-diffusion equation}\label{sec_heat}
Let $\Omega\subset \mathbb{R}^N$ ($N\geq 1$) be a bounded domain of class $C^{1,1}$. 
In this section, we apply \cref{thmain} for the stabilization of the convection-diffusion equation. Let us consider $\Gamma$ a non-empty open subset of $\partial \Omega$
and the control problem:
\begin{equation}\label{sys}
\left\{
\begin{array}{rl}
	\partial_t z-\Delta z-b\cdot \nabla z-cz=0  & \text{in} \ (0,\infty)\times \Omega,\\
	z=v& \text{on} \ (0,\infty)\times \Gamma,\\
	z=0& \text{on} \ (0,\infty)\times (\partial \Omega\setminus\Gamma),\\
	z(0,\cdot)=z^0 &  \text{in}\ \Omega,
\end{array}
\right.
\end{equation}
where $c,b,\div b\in L^\infty(\Omega)$.
In order to write \eqref{sys} under the form \eqref{ijt000}, we introduce the following functional setting:
$$
\mathbb{H}=L^2(\Omega),\quad \mathbb{U}=L^{2}(\Gamma),
$$
$$
Az=\Delta z+b\cdot \nabla z+cz,\quad \mathcal{D}(A)=H^2(\Omega)\cap H^1_0(\Omega).
$$ 
From standard results on this operator $A$ (see for example \cite[Theorem 5, p.305]{MR1625845}), we see that \eqref{H1} holds true.
To define the control operator $B$, we use a standard method (see, for instance \cite[pp.341-343]{TucsnakWeiss} or \cite{RaymondStokes}):
we first consider the lifting operator $D_0 \in\mathcal{L}(L^{2}(\partial\Omega); L^2(\Omega))$
such that for any $v\in L^2(\partial\Omega)$, $w=D_0 v$ is the unique solution of the following system
\begin{equation*}
\left\{
\begin{array}{rl}
\lambda_0 w-\Delta w-b\cdot \nabla w-cw=0& \text{ in } \Omega,\\
 w=v &  \text{ on } \ \partial\Omega,
\end{array}
\right.
\end{equation*}
where $\lambda_0\in \rho(A)$. 
Then, we set
$$
B = (\lambda_0-A)D_0 : \mathbb{U}\longrightarrow (\mathcal{D}(A^*))',
$$
where we have extended the operator $A$ as an operator from $L^2(\Omega)$ into $(\mathcal{D}(A^*))'$
and where we see $\mathbb{U}$ as a closed subspace of $L^2(\partial \Omega)$ (by extending by zero in $\partial \Omega\setminus\Gamma$ any $v\in \mathbb{U}$).
Using standard results on elliptic equations, we have that $B$ satisfies \eqref{H2} for any $\gamma>3/4$.

Let us recall how we can see that with $A$ and $B$ defined as above
\eqref{sys} writes as \eqref{ijt000}. We set $\widetilde z=z-w$, with $w=D_0v$. Then
$\widetilde z$ satisfies the system
\begin{equation*}
\left\{\begin{array}{rl}
\partial_t \widetilde z-\Delta \widetilde z -b\cdot \nabla \widetilde z-c\widetilde z=-\partial_t w+\lambda_0 w 
	& \text{in} \ (0,\infty)\times \Omega,\\
\widetilde z=0 &  \text{on} \ (0,\infty)\times \partial\Omega,\\
\widetilde z(0,\cdot)=\widetilde z^0:=z^0-w(0,\cdot)&  \text{ in } \ \Omega.
\end{array}
\right.
\end{equation*}
Using the Duhamel formula, we have
$$
\widetilde z(t)=e^{tA}\widetilde z^0+\int_0^t e^{(t-s)A}(-\partial_t w(s)+\lambda_0 w(s)) \ ds.
$$
By integrating by parts, we obtain
$$
z(t)=e^{tA}z^0+\int_0^t e^{(t-s)A}(\lambda_0-A) w(s)\ ds,
$$
that is
\begin{equation*}
\left\{
\begin{array}{rl}
z'=Az+(\lambda_0-A) D_0 v,\\
z(0)=z^0.
\end{array}
\right.
\end{equation*}

To apply \cref{thmain}, we only need to check \eqref{UCstab}. We recall that
$$
\mathcal{D}(A^*)=H^2(\Omega)\cap H^1_0(\Omega),
\quad
A^*\varepsilon=\Delta \varepsilon-\overline b\cdot \nabla \varepsilon+(\overline{c-\div b})\varepsilon,
$$
(see, for instance, \cite[p.345]{TucsnakWeiss}).
Moreover, by classical results (see \cite[Proposition 10.6.7]{TucsnakWeiss}), we see that
$$
D_0^* :=-\frac{\partial}{\partial \nu} (\lambda_0-A^*)^{-1},
$$
and thus
$$
B^*\varepsilon :=-\frac{\partial \varepsilon}{\partial \nu}_{|\Gamma}.
$$
Thus if $\varepsilon$ satisfies $A^*\varepsilon= \lambda \varepsilon$ and $B^*\varepsilon=0$, then
\begin{equation*}
\left\{
\begin{array}{rl}
\lambda \varepsilon-  \Delta \varepsilon+\overline b\cdot \nabla \varepsilon-(\overline{c-\div b})\varepsilon=0& \text{in}\ \Omega,\\
\varepsilon=0 &  \text{on} \ \partial\Omega,\\
\frac{\partial \varepsilon}{\partial \nu}=0&  \text{on} \ \Gamma.
\end{array}
\right.
\end{equation*}
From standard results on the unique continuation of the Laplace operator (see for instance \cite[Theorem 5.3.1, p.125]{MR0404822}), we deduce that $\varepsilon=0$. Thus \eqref{UCstab} holds for any 
$\sigma$ and we deduce the following result by applying \cref{thmain}:
\begin{Theorem}\label{stabheat}
Assume $\sigma>0$ and let us define $N_+$ by \eqref{Nc}.
Then there exist $K\in L^\infty_{\rm loc}(\mathbb{R}^2;\mathcal{L}(L^2(\Omega)))$, 
$\zeta_k\in H^2(\Omega)\cap H^1_0(\Omega)$, $v_k\in H^{1/2}(\Gamma)$, $k=1,\ldots,N_+$, such that
the solution $z$ of \eqref{sys} with
\begin{equation}
\label{fb1-heat}
v(t)=\mathds{1}_{[\tau,+\infty)}(t) \sum_{k=1}^{N_+} \left(\int_{\Omega} \left[z(t-\tau)+\int_0^{t-\tau} K(t-\tau,s)z(s) \ ds\right]\zeta_k \ dx\right) v_k,
\end{equation}
and for $z^0 \in L^2(\Omega)$ satisfies 
\begin{equation}\label{ijt008-heat}
\|z(t)\|_{L^2(\Omega)} \leq C e^{-\sigma t} \|z^0\|_{L^2(\Omega)} .
\end{equation}
\end{Theorem}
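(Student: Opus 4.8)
The plan is to verify that the operators $A$ and $B=(\lambda_0-A)D_0$ introduced above fit the abstract framework of \cref{thmain}, and then to apply that theorem with $f\equiv 0$. Most of the required verifications have already been carried out in the discussion preceding the statement; it remains to assemble them and to translate the abstract conclusion into \eqref{ijt008-heat}.

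First I would record that \eqref{H1} holds because $A$ has compact resolvent: the embedding $H^2(\Omega)\cap H^1_0(\Omega)\hookrightarrow L^2(\Omega)$ is compact on the bounded domain $\Omega$, so the spectrum of $A$ is a discrete set of eigenvalues of finite algebraic multiplicity with no finite cluster point, and $A$ generates an analytic semigroup by the standard theory for second-order elliptic operators with $L^\infty$ lower-order coefficients. Hypothesis \eqref{H2} follows from the elliptic regularity $D_0\in\mathcal{L}(L^2(\partial\Omega),H^{1/2}(\Omega))$, which gives $B\in\mathcal{L}(\mathbb{U},\mathbb{H}_{-\gamma})$ for every $\gamma>3/4$; and \eqref{H3} is the classical identification of the domains of the fractional powers $(\mu_0-A)^\alpha$ with the complex interpolation spaces $[L^2(\Omega),H^2(\Omega)\cap H^1_0(\Omega)]_\alpha$ for $\alpha\in[0,1]$, valid for the Dirichlet realisation of a uniformly elliptic operator on a $C^{1,1}$ domain.

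Next I would verify \eqref{UCstab} for an arbitrary $\sigma>0$, which is the only genuinely problem-specific step. Using $\mathcal{D}(A^*)=H^2(\Omega)\cap H^1_0(\Omega)$, $A^*\varepsilon=\Delta\varepsilon-\overline{b}\cdot\nabla\varepsilon+(\overline{c-\div b})\varepsilon$ and $B^*\varepsilon=-\partial_\nu\varepsilon|_\Gamma$, an eigenfunction $\varepsilon$ with $A^*\varepsilon=\lambda\varepsilon$ and $B^*\varepsilon=0$ solves a second-order elliptic equation on $\Omega$ together with the homogeneous Cauchy data $\varepsilon=0$ on $\partial\Omega$ and $\partial_\nu\varepsilon=0$ on the non-empty open set $\Gamma\subset\partial\Omega$. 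Extending $\varepsilon$ by zero across $\Gamma$ and invoking the unique continuation property for second-order elliptic operators forces $\varepsilon\equiv 0$ in a neighbourhood of $\Gamma$ and hence, by connectedness of $\Omega$, in all of $\Omega$. Thus \eqref{UCstab} holds for every $\sigma$.

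Finally I would apply \cref{thmain} with the choice $f\equiv 0$ and $\gamma'=0$: it produces $K\in L^\infty_{\rm loc}(\mathbb{R}^2;\mathcal{L}(L^2(\Omega)))$ and vectors $\zeta_k\in\mathcal{D}(A^*)=H^2(\Omega)\cap H^1_0(\Omega)$, $v_k\in B^*(\mathcal{D}(A^*))$, $k=1,\ldots,N_+$, such that the feedback \eqref{fb1} yields \eqref{ijt008}. Since $v_k=B^*\zeta$ for some $\zeta\in H^2(\Omega)\cap H^1_0(\Omega)$ and $B^*\zeta=-\partial_\nu\zeta|_\Gamma$, the trace theorem gives $v_k\in H^{1/2}(\Gamma)$. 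Rewriting the abstract pairing $(\cdot,\zeta_k)_{\mathbb{H}}$ as the integral $\int_\Omega(\cdot)\,\zeta_k\,dx$ turns \eqref{fb1} into \eqref{fb1-heat} and \eqref{ijt008} into \eqref{ijt008-heat}, completing the proof. The only point demanding care is the unique continuation step, where one must check that the regularity of $\Gamma$ and the $L^\infty$ bounds on $b$, $c$, $\div b$ are exactly what the underlying Carleman-estimate argument requires; everything else is a direct transcription of \cref{thmain}.
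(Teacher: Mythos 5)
Your proposal is correct and follows essentially the same route as the paper: verify \eqref{H1}, \eqref{H2} (and \eqref{H3}) for the Dirichlet realisation of the convection--diffusion operator with the lifting operator $D_0$, check \eqref{UCstab} via the elliptic unique continuation property applied to the adjoint eigenvalue problem with vanishing Cauchy data on $\Gamma$, and then invoke \cref{thmain} with $f\equiv 0$. The only additions beyond the paper's argument are minor explicit details (the trace-theorem justification that $v_k\in H^{1/2}(\Gamma)$ and the remark on \eqref{H3}), which are consistent with the paper's implicit reasoning.
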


\section{Feedback distributed stabilization of the Oseen system}\label{sec_oseen}
Let $\Omega\subset \mathbb{R}^3$ be a bounded domain of class $C^{1,1}$. 
In this section, we apply \cref{thmain} to the Oseen system:
\begin{equation}
\label{oseen}
\left\{
\begin{array}{rl}
\partial_t z+(w^S\cdot \nabla)z+(z\cdot \nabla)w^S-\nu \Delta z+\nabla q=\mathds{1}_{\mathcal{O}} v  & \text{in}\ (0,\infty)\times \Omega,\\
\nabla \cdot z=0& \text{in} \ (0,\infty)\times \Omega,\\
z=0 & \text{on} \ (0,\infty)\times \partial\Omega,\\
z(0,\cdot)=z^0 & \text{in} \ \Omega,
\end{array}
\right.
\end{equation}
where $w^S\in [H^2(\Omega)]^3$ is a fixed (real) velocity and $v$ is the control that acts on the nonempty open subset $\mathcal{O}\subset \Omega$. 
We could also consider the boundary stabilization of the Oseen system by using the same method as in the above section but with some adaptations due the incompressibility condition and due to the pressure (see \cite{MB1} for more details).

Let us give the functional setting:
$$
\mathbb{H}=\{z\in [L^2(\Omega)]^3,\quad \nabla\cdot z=0 \text{ in }\Omega,\quad z\cdot n=0\text{ on }\partial\Omega\},
\quad
\mathbb{U}=[L^2(\mathcal{O})]^3.
$$
We denote by $\mathbb{P}$ the orthogonal projection $\mathbb{P} : [L^2(\Omega)]^3\to \mathbb{H}$ and we define the Oseen operator:
$$
\mathcal{D}(A)= [H^2(\Omega)\cap H^1_0(\Omega)]^3 \cap \mathbb{H},
\quad
Az=\mathbb{P}\left(\nu\Delta z-(w^S\cdot \nabla)z-(z\cdot \nabla)w^S\right).
$$
We recall (see, for instance \cite[Theorem 20]{MB1}) that the operator $A$ is the infinitesimal generator of 
an analytic semigroup on $\mathbb{H}$ and has a compact resolvent. Moreover,
$$
\mathcal{D}(A^*)= [H^2(\Omega)\cap H^1_0(\Omega)]^3 \cap \mathbb{H},
\quad
A^* \varepsilon=\mathbb{P}\left(\nu\Delta \varepsilon+(w^S\cdot \nabla)\varepsilon-(\nabla w^S)^* \varepsilon\right).
$$

We also define the control operator $B\in \mathcal{L}(\mathbb{U},\mathbb{H})$ by
$$
Bv= \mathbb{P} \left(\mathds{1}_{\mathcal{O}} v\right),
$$
and we can check that
$$
B^* \varepsilon = \varepsilon_{|\mathcal{O}}.
$$
In particular, we see that \eqref{H1} and \eqref{H2} hold true and 
if $\varepsilon$ satisfies $A^*\varepsilon= \lambda \varepsilon$ and $B^*\varepsilon=0$, then
\begin{equation*}
\left\{
\begin{array}{rl}
\lambda \varepsilon-  \nu \Delta \varepsilon-(w^S\cdot \nabla)\varepsilon+(\nabla w^S)^* \varepsilon+\nabla \pi=0& \text{in}\ \Omega,\\
\nabla \cdot \varepsilon =0& \text{in}\ \Omega,\\
\varepsilon=0 & \text{on} \ \partial\Omega,\\
\varepsilon\equiv 0& \text{in} \ \mathcal{O}.
\end{array}
\right.
\end{equation*}
Then using \cite{FabreLebeau}, we deduce that $\varepsilon=0$. Thus \eqref{UCstab} holds for any 
$\sigma$ and we deduce the following result by applying \cref{thmain}:
\begin{Theorem}\label{staboseen}
Assume $\sigma>0$ and let us define $N_+$ by \eqref{Nc}.
Then there exist $K\in L^\infty_{\rm loc}(\mathbb{R}^2;\mathcal{L}(\mathbb{H}))$, 
$\zeta_k\in \mathcal{D}(A^*)$, $v_k\in [L^2(\mathcal{O})]^3$, $k=1,\ldots,N_+$, such that
the solution $z$ of \eqref{oseen} with
\begin{equation}
\label{fb1-oseen}
v(t)=\mathds{1}_{[\tau,+\infty)}(t) \sum_{k=1}^{N_+} \left(\int_{\Omega} \left[z(t-\tau)+\int_0^{t-\tau} K(t-\tau,s)z(s) \ ds\right]\zeta_k \ dx\right) v_k,
\end{equation}
and for $z^0 \in \mathbb{H}$ satisfies 
\begin{equation}
\|z(t)\|_{[L^2(\Omega)]^3} \leq C e^{-\sigma t} \|z^0\|_{[L^2(\Omega)]^3} .
\end{equation}
\end{Theorem}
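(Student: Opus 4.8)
The plan is to deduce \cref{staboseen} directly from \cref{thmain}, so the only work is to verify that the abstract hypotheses \eqref{H1}, \eqref{H2}, \eqref{H3} and \eqref{UCstab} hold for the Oseen operator $A$ and the control operator $B$ introduced above, and then to read off the conclusion with $f\equiv 0$ and $\gamma=\gamma'=0$.

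Three of the four hypotheses have essentially been checked in the discussion preceding the statement, and I would just record them. Since $A$ generates an analytic semigroup on $\mathbb{H}$ with compact resolvent, its spectrum is a discrete set of eigenvalues of finite algebraic multiplicity accumulating only at $-\infty$, so \eqref{H1} holds for every $\sigma>0$; since $Bv=\mathbb{P}(\mathds{1}_{\mathcal{O}}v)$ belongs to $\mathcal{L}(\mathbb{U},\mathbb{H})$, \eqref{H2} holds with $\gamma=0$; and the unique continuation argument based on \cite{FabreLebeau} recalled above shows that \eqref{UCstab} holds for every $\sigma>0$. Hence the genuine task is to establish \eqref{H3}.

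For \eqref{H3} I would compare $\mu_0-A$ with the shifted Stokes operator $A_S:=\mu_0-\nu\mathbb{P}\Delta$, choosing $\mu_0\in\rho(A)$ with large positive real part, so that $A_S$ is positive self-adjoint on $\mathbb{H}$ with the same domain $\mathcal{D}(A)=[H^2(\Omega)\cap H^1_0(\Omega)]^3\cap\mathbb{H}$; for such an operator the domains of its fractional powers coincide with the complex interpolation spaces, i.e. $\mathcal{D}(A_S^\alpha)=[\mathbb{H},\mathcal{D}(A)]_\alpha$ for $\alpha\in[0,1]$. The difference $(\mu_0-A)-A_S$ is the operator $z\mapsto\mathbb{P}\left((w^S\cdot\nabla)z+(z\cdot\nabla)w^S\right)$, and using $w^S\in[H^2(\Omega)]^3$ together with the Sobolev embeddings in dimension three (in particular $L^3(\Omega)\hookrightarrow L^2(\Omega)$) one checks that it is bounded from $\mathcal{D}(A_S^{1/2})=[H^1_0(\Omega)]^3\cap\mathbb{H}$ into $\mathbb{H}$, hence $A_S$-bounded with relative bound zero. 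By the perturbation theory for operators with bounded imaginary powers one then obtains $\mathcal{D}((\mu_0-A)^\alpha)=\mathcal{D}(A_S^\alpha)=[\mathbb{H},\mathcal{D}(A)]_\alpha$ for $\alpha\in[0,1]$, which is \eqref{H3}; this identification for the Oseen operator is carried out in \cite{RaymondThevenet2009,BT2014,MB1}, and I would simply invoke it. I expect this to be the only non-routine point, precisely because $A$ is not self-adjoint, whereas everything else is bookkeeping.

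Finally I would apply \cref{thmain} with $f\equiv0$. It yields $K\in L^\infty_{\rm loc}(\mathbb{R}^2;\mathcal{L}(\mathbb{H}))$ and, writing the finite-rank operator $G$ of \cref{L1} as $G\phi=\sum_{k=1}^{N_+}(\phi,\zeta_k)_{\mathbb{H}}v_k$ with $\zeta_k\in\mathbb{H}_+^*\subset\mathcal{D}(A^*)$ and $v_k\in\mathbb{U}_+=B^*\mathbb{H}_+^*\subset[L^2(\mathcal{O})]^3$, the feedback \eqref{fb1} produced by \cref{thmain} is exactly \eqref{fb1-oseen}; estimate \eqref{ijt008} with $f\equiv0$ is then the claimed bound $\|z(t)\|_{[L^2(\Omega)]^3}\leq Ce^{-\sigma t}\|z^0\|_{[L^2(\Omega)]^3}$, since on $\mathbb{H}$ the norm $\|\cdot\|_{\mathbb{H}}$ is induced by $[L^2(\Omega)]^3$. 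This completes the scheme.
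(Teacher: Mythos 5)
Your proposal is correct and follows essentially the same route as the paper: verify \eqref{H1}, \eqref{H2} (with $\gamma=0$) and \eqref{UCstab} via \cite{FabreLebeau}, then apply \cref{thmain} with $f\equiv 0$. You are in fact slightly more careful than the paper, which does not explicitly address \eqref{H3} for the Oseen operator and only implicitly relies on the identification $\mathbb{H}_\alpha=[\mathbb{H},\mathcal{D}(A)]_\alpha$ from \cite{MB1}; your perturbation-of-the-Stokes-operator argument fills that in legitimately.
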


Let us define 
\begin{equation}\label{ijt105}
\mathbb{V}= [H^1_0(\Omega)]^3 \cap \mathbb{H},
\end{equation}
then we have that $\mathbb{V}=\mathbb{H}_{1/2}$ (see again \cite[Theorem 20]{MB1}). Thus applying \cref{thmain},
we have also the following result on 
\begin{equation}
\label{oseen-2}
\left\{
\begin{array}{rl}
\partial_t z+(w^S\cdot \nabla)z+(z\cdot \nabla)w^S-\nu \Delta z+\nabla q=\mathds{1}_{\mathcal{O}} v +f  & \text{in}\ (0,\infty)\times \Omega,\\
\nabla \cdot z=0& \text{in} \ (0,\infty)\times \Omega,\\
z=0 & \text{on} \ (0,\infty)\times \partial\Omega,\\
z(0,\cdot)=z^0 & \text{in} \ \Omega.
\end{array}
\right.
\end{equation}
\begin{Theorem}\label{staboseen-2}
Assume $\sigma>0$ and let us consider $v$ given by \eqref{fb1-oseen}.
Then for any $z^0 \in \mathbb{V}$ and for any $f\in L^2_\sigma(0,\infty;\mathbb{H})$
the solution of \eqref{oseen-2} satisfies 
$$
z\in L^2_\sigma (0,\infty;[H^2(\Omega)]^3)\cap C^0_\sigma ([0,\infty);[H^1(\Omega)]^3)\cap H^1_\sigma (0,\infty;[L^2(\Omega)]^3),
$$
and 
\begin{equation}\label{ijt210}
\|z\|_{L^2_\sigma (0,\infty;[H^2(\Omega)]^3)\cap C^0_\sigma ([0,\infty);[H^1(\Omega)]^3)\cap H^1_\sigma (0,\infty;[L^2(\Omega)]^3)}
\leq C\left(\|z^0\|_{[H^1(\Omega)]^3} + \|f\|_{L^2_{\sigma}(0,\infty;[L^2(\Omega)]^3)}\right).
\end{equation}
\end{Theorem}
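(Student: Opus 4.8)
The plan is to obtain \cref{staboseen-2} as a direct application of the second part of \cref{thmain}, applied to the Oseen triple $(A,B,f)$ with the special choices $\gamma=0$ and $\gamma'=0$. First I would observe that \eqref{oseen-2}, after applying the Leray projector $\mathbb{P}$ (which eliminates the pressure exactly as in the reduction of \eqref{oseen} performed above), takes the abstract form \eqref{ijt000} with the Oseen operator $A$, the control operator $Bv=\mathbb{P}(\mathds{1}_{\mathcal{O}}v)$, and source term $f$ (note that $f(t)\in\mathbb{H}$ for a.e.\ $t$, so $\mathbb{P}$ acts trivially on it). I would then record that all hypotheses of \cref{thmain} hold in this setting: \eqref{H1} because $A$ has compact resolvent and generates an analytic semigroup, \eqref{H2} with $\gamma=0$ since $B\in\mathcal{L}(\mathbb{U},\mathbb{H})$, and \eqref{UCstab} for every $\sigma>0$ by the unique continuation argument already given in this section via \cite{FabreLebeau}. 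Consequently the feedback \eqref{fb1-oseen} is precisely the feedback \eqref{fb1} of \cref{thmain} written in the present functional framework, and $z^0\in\mathbb{V}=\mathbb{H}_{1/2}$ (by \cite[Theorem~20]{MB1}) and $f\in L^2_\sigma(0,\infty;\mathbb{H})$ are exactly the regularity assumptions required for the strong estimate \eqref{ijt200}.

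Next I would translate the abstract conclusion into the stated one. With $\gamma=\gamma'=0$ and $z^0\in\mathbb{H}_{1/2}$, the second part of \cref{thmain} yields
$$
z\in L^2_\sigma(0,\infty;\mathbb{H}_1)\cap C^0_\sigma([0,\infty);\mathbb{H}_{1/2})\cap H^1_\sigma(0,\infty;\mathbb{H}),
$$
together with the bound in terms of $\|z^0\|_{\mathbb{H}_{1/2}}$ and $\|f\|_{L^2_\sigma(0,\infty;\mathbb{H})}$. It then remains to identify these abstract spaces with the concrete ones appearing in \eqref{ijt210}: one has $\mathbb{H}_1=\mathcal{D}(A)=[H^2(\Omega)\cap H^1_0(\Omega)]^3\cap\mathbb{H}$ with graph norm equivalent to the $[H^2(\Omega)]^3$-norm (Stokes elliptic regularity), $\mathbb{H}_{1/2}=\mathbb{V}\hookrightarrow[H^1(\Omega)]^3$, and $\mathbb{H}\hookrightarrow[L^2(\Omega)]^3$; combined with $\|z^0\|_{\mathbb{H}_{1/2}}\leq C\|z^0\|_{[H^1(\Omega)]^3}$ and $\|f\|_{L^2_\sigma(0,\infty;\mathbb{H})}\leq C\|f\|_{L^2_\sigma(0,\infty;[L^2(\Omega)]^3)}$ this gives \eqref{ijt210}.

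The only point that is not pure bookkeeping is the verification of \eqref{H3} for the Oseen operator, i.e.\ $\mathbb{H}_\alpha=[\mathbb{H},\mathcal{D}(A)]_\alpha$ for $\alpha\in[0,1]$. I would argue this from the corresponding (classical) property of the Stokes operator: since $w^S\in[H^2(\Omega)]^3$, the convection and reaction terms $(w^S\cdot\nabla)\cdot$ and $(\,\cdot\,\cdot\nabla)w^S$ are of lower order relative to the Stokes operator, so the fractional-power domains $\mathcal{D}(A^\theta)$ coincide with those of the Stokes operator for $\theta\in[0,1]$, and \eqref{H3} transfers from the Stokes case (see \cite{MB1}). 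This is the main obstacle; once it is settled, the theorem follows immediately from \cref{thmain}, and the same reasoning also reproves \cref{staboseen} as the $f=0$ case of the first part of \cref{thmain}.
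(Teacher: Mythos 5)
Your proposal is correct and follows essentially the same route as the paper, which obtains \cref{staboseen-2} by directly applying the second part of \cref{thmain} with $\gamma=\gamma'=0$ and the identification $\mathbb{V}=\mathbb{H}_{1/2}$ from \cite[Theorem 20]{MB1}. Your explicit attention to verifying \eqref{H3} for the Oseen operator and to the space identifications $\mathbb{H}_1=\mathcal{D}(A)$, $\mathbb{H}_{1/2}=\mathbb{V}$ is a reasonable filling-in of details the paper leaves implicit.
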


\section{Local feedback distributed stabilization of the Navier-Stokes system}\label{sec_navier}
We use the same notation as in the previous section. We consider the stabilization of the Navier-Stokes system with internal control:
\begin{equation}
\label{navier}
\left\{
\begin{array}{rl}
\partial_t \widetilde z+(\widetilde z \cdot \nabla)\widetilde z-\nu \Delta \widetilde z+\nabla \widetilde q=\mathds{1}_{\mathcal{O}} v+f^S  & \text{in}\ (0,\infty)\times \Omega,\\
\nabla \cdot \widetilde z=0& \text{in} \ (0,\infty)\times \Omega,\\
\widetilde z=b^S & \text{on} \ (0,\infty)\times \partial\Omega,\\
\widetilde z(0,\cdot)=\widetilde z^0 & \text{in} \ \Omega,
\end{array}
\right.
\end{equation}
around the stationary state
\begin{equation}
\label{navier-S}
\left\{
\begin{array}{rl}
(w^S \cdot \nabla) w^S-\nu \Delta w^S+\nabla r^S=f^S  & \text{in}\ \Omega,\\
\nabla \cdot w^S=0& \text{in} \ \Omega,\\
w^S=b^S & \text{on} \ \partial\Omega.
\end{array}
\right.
\end{equation}
We assume that $(w^S,r^S)$ is a solution of \eqref{navier-S} such that $w^S\in [H^2(\Omega)]^3$ as in the previous section.
The functions $f^S\in [L^2(\Omega)]^3$ and $b^S\in [W^{3/2}(\partial \Omega)]^3$ are independent of time.

We define 
$$
z=\widetilde z-w^S, \quad q=\widetilde q-r^S, \quad z^0=\widetilde z^0-w^S,
$$
so that 
\begin{equation}
\label{navier-diff}
\left\{
\begin{array}{rl}
\partial_t z+(w^S\cdot \nabla)z+(z\cdot \nabla)w^S-\nu \Delta z+\nabla q=\mathds{1}_{\mathcal{O}} v -(z\cdot\nabla) z & \text{in}\ (0,\infty)\times \Omega,\\
\nabla \cdot z=0& \text{in} \ (0,\infty)\times \Omega,\\
z=0 & \text{on} \ (0,\infty)\times \partial\Omega,\\
z(0,\cdot)=z^0 & \text{in} \ \Omega.
\end{array}
\right.
\end{equation}

Then we consider the following mapping
$$
\mathcal{Z} : L^2_\sigma(0,\infty;[L^2(\Omega)]^3) \to L^2_\sigma(0,\infty;[L^2(\Omega)]^3),
\quad
f\mapsto -(z\cdot\nabla) z,
$$
where $z$ is the solution given in \cref{staboseen-2}, associated with $z^0\in \mathbb{V}$ and $f\in L^2_\sigma(0,\infty;\mathbb{H})$.
Then by standard Sobolev embeddings, we find
\begin{equation}\label{nonlin}
\| z^1\cdot\nabla z^2\|_{L^2_\sigma(0,\infty;[L^2(\Omega)]^3)}
\\
\leq C \|z^1\|_{C^0_\sigma ([0,\infty);[H^1(\Omega)]^3)}\|z^2\|_{L^2_\sigma (0,\infty;[H^2(\Omega)]^3)}.
\end{equation}
Thus $\mathcal{Z}$ is well-defined. Let us set
$$
R=\|z^0\|_{[H^1(\Omega)]^3},
$$
and
$$
B_R = \left\{f\in L^2_\sigma(0,\infty;[L^2(\Omega)]^3) \ ; \ \|f\|_{L^2_\sigma(0,\infty;[L^2(\Omega)]^3)} \leq R \right\}.
$$
Then from \eqref{nonlin} and \eqref{ijt210},
$$
\|\mathcal{Z}(f)\|_{L^2_\sigma(0,\infty;[L^2(\Omega)]^3)} \leq 4CR^2,
$$
and $B_R$ is invariant by $\mathcal{Z}$ for $R$ small enough. Similarly, using \eqref{nonlin} and \eqref{ijt210},
for any $f^1,f^2\in B_R$, then
$$
\|\mathcal{Z}(f^1)-\mathcal{Z}(f^2)\|_{L^2_\sigma(0,\infty;[L^2(\Omega)]^3)} \leq 2CR \|f^1-f^2\|_{L^2_\sigma(0,\infty;[L^2(\Omega)]^3)},
$$
and thus $\mathcal{Z}$ is a strict contraction on $B_R$ for $R$ small enough. We thus deduce that $\mathcal{Z}$ admits a fixed point $f$ for
$\|z^0\|_{[H^1(\Omega)]^3}$ small enough and we notice that the solution 
$z$ given in \cref{staboseen-2}, associated with $z^0\in \mathbb{V}$ and $f\in L^2_\sigma(0,\infty;[L^2(\Omega)]^3)$ is a solution 
of \eqref{navier-diff}.

We have obtained the following local stabilization result for the Navier-Stokes system with internal control with delay:
\begin{Theorem}\label{thmNS}
Assume $\sigma>0$ and let us define $N_+$ by \eqref{Nc}.
Then there exist $K\in L^\infty_{\rm loc}(\mathbb{R}^2;\mathcal{L}(\mathbb{H}))$, 
$\zeta_k\in \mathcal{D}(A^*)$, $v_k\in [L^2(\mathcal{O})]^3$, $k=1,\ldots,N_+$ and $R>0$, such that
for any 
$$
\widetilde{z}^0\in [H^1(\Omega)]^3, \quad \nabla \cdot \widetilde{z}^0=0 \quad \text{in} \ \Omega,
\quad 
\widetilde z^0=b^S \  \text{on} \ \partial\Omega,
$$
and
$$
\|\widetilde{z}^0-w^S \|_{[H^1(\Omega)]^3}\leq R,
$$
there exists a unique solution $z$ of \eqref{navier} with
\begin{equation}
v(t)=\mathds{1}_{[\tau,+\infty)}(t) \sum_{k=1}^{N_+} \left(\int_{\Omega} \left[(\widetilde z-w^S)(t-\tau)+\int_0^{t-\tau} K(t-\tau,s)(\widetilde z-w^S)(s) \ ds\right]\zeta_k \ dx\right) v_k,
\end{equation}
satisfying
$$
\widetilde z-w^S\in L^2_\sigma (0,\infty;[H^2(\Omega)]^3)\cap C^0_\sigma ([0,\infty);[H^1(\Omega)]^3)\cap H^1_\sigma (0,\infty;[L^2(\Omega)]^3).
$$
Moreover we have the estimate
\begin{equation}\label{ijt213}
\|\widetilde z-w^S\|_{L^2_\sigma (0,\infty;[H^2(\Omega)]^3)\cap C^0_\sigma ([0,\infty);[H^1(\Omega)]^3)\cap H^1_\sigma (0,\infty;[L^2(\Omega)]^3)}
\leq C\|\widetilde z^0-w^S\|_{[H^1(\Omega)]^3}.
\end{equation}
\end{Theorem}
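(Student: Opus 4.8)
The plan is to combine \cref{staboseen-2} with a fixed-point argument, exactly as sketched in the discussion preceding the statement. First I would perform the change of unknowns $z=\widetilde z-w^S$, $q=\widetilde q-r^S$, $z^0=\widetilde z^0-w^S$, so that the Navier--Stokes system \eqref{navier} around the stationary state \eqref{navier-S} becomes the perturbed Oseen system \eqref{navier-diff}, in which the only nonlinearity is the term $-(z\cdot\nabla)z$ and in which, thanks to the compatibility conditions on $\widetilde z^0$, the new initial datum $z^0$ belongs to $\mathbb{V}=[H^1_0(\Omega)]^3\cap\mathbb{H}=\mathbb{H}_{1/2}$ with $\|z^0\|_{[H^1(\Omega)]^3}=R$ small.

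Next I would set up the map $\mathcal{Z}$ that sends a source $f\in L^2_\sigma(0,\infty;[L^2(\Omega)]^3)$ to $-(z\cdot\nabla)z$, where $z$ is the solution of the \emph{closed-loop} Oseen system \eqref{oseen-2} with the delayed feedback \eqref{fb1-oseen}, the initial datum $z^0$, and source $f$; this solution and the estimate \eqref{ijt210} are furnished by \cref{staboseen-2}. The key analytic input is the bilinear estimate \eqref{nonlin}, which follows from the Sobolev embeddings $H^1(\Omega)\hookrightarrow L^6(\Omega)$ and $H^2(\Omega)\hookrightarrow W^{1,\infty}(\Omega)\hookrightarrow L^\infty(\Omega)$ in dimension three together with Hölder's inequality, and which is compatible with the weight $e^{\sigma t}$ since one factor is in $C^0_\sigma$ and the other in $L^2_\sigma$. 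Combining \eqref{nonlin} with \eqref{ijt210} gives, on the ball $B_R$, the quadratic bound $\|\mathcal{Z}(f)\|\le 4CR^2$ and the Lipschitz bound $\|\mathcal{Z}(f^1)-\mathcal{Z}(f^2)\|\le 2CR\|f^1-f^2\|$; hence for $R$ small enough $\mathcal{Z}$ maps $B_R$ into itself and is a strict contraction, so the Banach fixed-point theorem yields a unique fixed point $f^\star\in B_R$. The corresponding $z$ from \cref{staboseen-2} then solves \eqref{navier-diff}, and $\widetilde z:=z+w^S$ solves \eqref{navier} with the prescribed feedback; undoing the weight in \eqref{ijt210} and using $\|f^\star\|\le 4CR^2\le R$ produces the estimate \eqref{ijt213}, which in particular gives the exponential decay $\widetilde z\to w^S$. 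Uniqueness of $z$ in the stated class follows from the uniqueness of the fixed point together with the fact that any such solution $z$ provides, via $f:=-(z\cdot\nabla)z$, a fixed point of $\mathcal{Z}$ in $B_R$ (after possibly shrinking $R$ so that the a priori bound \eqref{ijt213} forces $f$ into $B_R$).

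The main obstacle is not conceptual but a matter of bookkeeping: one must verify that the feedback law $v$ appearing in \cref{staboseen-2}—built from $G$ and the kernel $K$ of \cref{L1}--\cref{L2} for the Oseen operator—is genuinely independent of the source, so that a single feedback works simultaneously for the whole ball of data; this is exactly the point of \cref{thmain}, where $K$, $\zeta_k$, $v_k$ depend only on $A$, $B$ and $\sigma$. A secondary technical point is to check that the solution $z$ obtained from the linear theory, when the source is the fixed point, has enough regularity for the trace $\widetilde z=b^S$ on $\partial\Omega$ and the pressure term $\nabla\widetilde q$ to make sense in \eqref{navier}; this is guaranteed by the membership $z\in L^2_\sigma(0,\infty;[H^2(\Omega)]^3)\cap C^0_\sigma([0,\infty);[H^1(\Omega)]^3)$ together with the usual recovery of the pressure from the divergence-free projection $\mathbb{P}$. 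Once these points are in place, the argument closes.
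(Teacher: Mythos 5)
Your proposal is correct and follows essentially the same route as the paper: the change of unknowns $z=\widetilde z-w^S$ reducing \eqref{navier} to \eqref{navier-diff}, the map $\mathcal{Z}:f\mapsto -(z\cdot\nabla)z$ built on \cref{staboseen-2}, the bilinear estimate \eqref{nonlin} combined with \eqref{ijt210} to get the quadratic and Lipschitz bounds on the ball $B_R$, and the Banach fixed-point theorem for $R$ small. Your additional remarks on uniqueness and on the feedback being independent of the source are sound and, if anything, slightly more explicit than the paper's own treatment.
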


\section*{Acknowledgments.} 
The two first authors were partially supported by the ANR research project IFSMACS (ANR-15-CE40-0010). 
The third author was partially supported by the ANR research projects ISDEEC (ANR-16-CE40-0013) and ANR ODISSE (ANR-19-CE48-0004-01).

\bibliographystyle{plain}
\bibliography{biblio}
	
\end{document}